\newtheorem{theorem}{Theorem}[section]
\newtheorem*{theo}{Theorem}
\newtheorem{lemma}[theorem]{Lemma}
\newtheorem{proposition}[theorem]{Proposition}
\theoremstyle{definition}
\newtheorem*{defn}{Definition}
\newtheorem{remark}[theorem]{Remark}
\sloppy \setcounter{tocdepth}{1}
\numberwithin{equation}{section}
\renewcommand{\k}{\Bbbk}
\newcommand{\set}[1]{\left\{#1\right\}}
\newcommand{\C}{\mathbb{C}}
\newcommand{\PP}{\mathbb{P}}
\newcommand{\R}{\mathbb{R}}
\newcommand{\Q}{\mathbb{Q}}
\newcommand{\Z}{\mathbb{Z}}
\DeclareMathOperator{\id}{id}
\DeclareMathOperator{\tc}{{\sf TC}}
\DeclareMathOperator{\cl}{\sf cl}
\DeclareMathOperator{\zcl}{\sf zcl}
\DeclareMathOperator{\cat}{cat}
\DeclareMathOperator{\secat}{\sf {genus}}
\DeclareMathOperator{\PSL}{PSL}
\DeclareMathOperator{\SO}{SO}
\begin{document}

\title[Collision-free motion planning on surfaces]
{Topological complexity of collision-free motion planning on surfaces}

\author[D. Cohen]{Daniel C. Cohen}
\address{Department of Mathematics, Louisiana State University,
Baton Rouge, LA 70803, USA}
\email{\href{mailto:cohen@math.lsu.edu}{cohen@math.lsu.edu}}
\urladdr{\href{http://www.math.lsu.edu/~cohen/}
{www.math.lsu.edu/\char'176cohen}}

\author[M. Farber]{Michael Farber}
\address{Department of Mathematics, University of Durham, Durham DH1 3LE, UK}
\email{\href{mailto:Michael.Farber@durham.ac.uk}{Michael.Farber@durham.ac.uk}}
\urladdr{\href{http://maths.dur.ac.uk/~dma0mf/}
{maths.dur.ac.uk/\char'176dma0mf}}

\subjclass[2000]{20F36, 55M30, 55R80}

\keywords{surface configuration space, topological complexity}

\begin{abstract}
The topological complexity $\tc(X)$ is a numerical homotopy invariant of a topological space $X$ which is motivated by robotics and is similar in spirit to the classical Lusternik--Schnirelmann category of $X$.  Given a mechanical system with configuration space $X$, the invariant $\tc(X)$ 
measures the complexity of all possible motion planning algorithms designed for the system. 

In this paper, we compute the topological complexity of the configuration space of $n$ distinct ordered points on an orientable surface. Our main tool is a theorem of B.~Totaro describing the cohomology of configuration spaces of algebraic varieties. 
\end{abstract}


\maketitle

\section{Introduction} \label{sec:intro}

Let $X$ be a path-connected topological space, with the homotopy type of a finite CW-complex.  Viewing $X$ as the space of configurations of a mechanical system, the motion planning problem consists of constructing an algorithm which takes as input pairs of configurations $(x_0,x_1) \in X \times X$, and produces a continuous path $\gamma\colon [0,1] \to X$ from the initial configuration $x_0=\gamma(0)$ to the terminal configuration $x_1=\gamma(1)$.  The motion planning problem is a central theme of robotics, see, for example, Latombe~\cite{La} and Sharir \cite{Sh}.

A topological approach to this problem was developed by the second author \cite{Fa03}.  Let $PX$ denote the space of all continuous paths $\gamma\colon [0,1] \to X$, equipped with the compact-open topology.  The map $\pi\colon PX \to X \times X$ defined by sending a path to its endpoints, $\pi\colon\gamma \mapsto (\gamma(0),\gamma(1))$, is a fibration, with fiber $\Omega{X}$, the based loop space of $X$. The motion planning problem asks for a section of this fibration, a map $s\colon X\times X \to PX$ satisfying $\pi \circ s = \id_{X\times X}$.
It would be desirable for the motion planning algorithm to depend continuously on the input.  However, one can show that there exists a globally continuous section $s\colon X\times X \to PX$ if and only if $X$ is contractible, see \cite[Thm.~1]{Fa03}.

\begin{defn}
The \emph{topological complexity} of $X$, $\tc(X)$,  is the smallest positive integer $k$ for which $X\times X=U_1\cup\dots\cup U_k$, where $U_i$ is open and there exists a continuous section $s_i\colon U_i\to PX$ satisfying $\pi \circ s_i=\id_{U_i}$ for $1\le i \le k$.  In other words, the topological complexity of $X$ is
the Schwarz genus (or sectional category) of the path space fibration, $\tc(X)=\secat(\pi\colon PX \to X\times X)$.
\end{defn}

When faced with the problem of the motion of $n$ distinct particles in $X$ with the condition that no collision may occur during the motion, one is lead to consider the the topological complexity of the space
\[
F(X,n)=\set{(x_1,\dots,x_n) \in X^{\times n} \mid x_i \neq x_j\ \text{for}\ i \neq j},
\]
the configuration space of $n$ distinct ordered points in $X$, where $X^{\times n}=X\times \dots \times X$ is the Cartesian product of $n$ copies of $X$.
In the case where $X=\R^m$ is a Euclidean space, the topological complexity of the corresponding configuration space was found in \cite{FY,FG}.
The purpose of this note is to determine $\tc(F(X,n))$ 
in the case where $X=\Sigma_g$ is an orientable surface.

\begin{theo} The topological complexity of the configuration space of $n$ distinct ordered points on an orientable surface $\Sigma_g$ of genus $g$ is
\[
\tc(F(\Sigma_g,n))=\begin{cases}
3&\text{if $g=0$ and $n\le 2$,}\\
2n-2&\text{if $g=0$ and $n\ge 3$,}\\
2n+1&\text{if $g=1$ and $n\ge 1$,}\\
2n+3&\text{if $g\ge 2$ and $n\ge 1$.}
\end{cases}
\]
\end{theo}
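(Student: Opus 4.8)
The plan is to establish the theorem via matching lower and upper bounds for each case, using standard properties of $\tc$ together with the cohomological information about $F(\Sigma_g,n)$ supplied by Totaro's description of the cohomology of configuration spaces. Recall the two basic inequalities for any finite CW-complex $X$: the lower bound $\tc(X)\ge \zcl(X)+1$, where $\zcl(X)$ is the zero-divisor cup-length (the length of the longest nontrivial product of elements in the kernel of $H^*(X)\otimes H^*(X)\to H^*(X)$), and the upper bound $\tc(X)\le 2\dim X+1$, as well as $\tc(X)\le 2\cat(X)-1$. For $F(\Sigma_g,n)$ the real dimension is $2n$, so the upper bound $\tc\le 4n+1$ is far from sharp and the real work is on both sides.

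The genus-zero case I would handle separately, since $F(S^2,n)$ is well understood: $F(S^2,1)=S^2$ and $F(S^2,2)\simeq S^2$ give $\tc=3$ by the known computation of $\tc(S^2)$, while for $n\ge 3$ one uses the fibration $F(S^2,n)\to F(S^2,3)$ with fiber $F(S^2\setminus\{3\ \text{pts}\},n-3)$ and the fact that $F(S^2,3)\cong \PSL_2(\C)\simeq \SO(3)=\mathbb{RP}^3$ is a parallelizable $3$-manifold; this reduces the computation to that of the complement of finitely many points in a $2$-sphere, i.e.\ to a wedge/aspherical situation analogous to the Euclidean case treated in \cite{FY,FG}, giving $\tc=2n-2$. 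For $g\ge 1$ the surface $\Sigma_g$ is a $K(\pi,1)$, hence so is $F(\Sigma_g,n)$, and I would compute $\tc$ through its cohomology ring. The upper bounds come from a covering/fibration argument: $F(\Sigma_g,n)$ fibers over $F(\Sigma_g,1)=\Sigma_g$ (for $g\ge 2$, using that $\Sigma_g$ has $\cat=3$ and $\tc=4$) or over $F(\Sigma_g,2)$ (for $g=1$, exploiting that the torus is a topological group so $F(T^2,1)=T^2$ and $F(T^2,2)$ are especially tractable), with fiber $F(\Sigma_g\setminus\{k\ \text{pts}\},n-k)$, a $K(\pi,1)$ whose $\tc$ and $\cat$ are controlled inductively; subadditivity of $\secat$ over the relevant product/fibration decomposition then yields $\tc(F(\Sigma_g,n))\le 2n+1$ for $g=1$ and $\le 2n+3$ for $g\ge 2$.

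The heart of the argument — and the main obstacle — is the matching lower bound, for which I would produce an explicit nonzero zero-divisor product of length $2n$ (resp.\ $2n+2$) in $H^*(F(\Sigma_g,n))\otimes H^*(F(\Sigma_g,n))$. Totaro's theorem identifies $H^*(F(\Sigma_g,n);\Q)$ (or with suitable coefficients) as the cohomology of an explicit bigraded complex built from $H^*(\Sigma_g)$ and classes $G_{ij}$ of degree one associated to the diagonals $x_i=x_j$, modulo the Arnold and symmetry relations together with relations forced by the cohomology of $\Sigma_g$ itself. Writing $\bar a = a\otimes 1 - 1\otimes a$ for the zero-divisor associated to a class $a$, I would take the classes $\bar G_{i,i+1}$ for $i=1,\dots,n-1$ together with classes $\bar \alpha_i,\bar\beta_i$ pulled back from $H^1(\Sigma_g)$ along the $i$-th coordinate projection (there are $2g$ independent ones, but geometrically one needs roughly two per point after accounting for relations), and show that an appropriate product of $2n$ of these (for $g=1$) or $2n+2$ of them (for $g\ge2$) survives in the cohomology of the square of the configuration space. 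Verifying nonvanishing is delicate precisely because Totaro's relations can cause large products of the $G_{ij}$ to collapse; the key trick, as in the Euclidean-space computations, is to evaluate the putative zero-divisor product on a well-chosen homology class — e.g.\ a product of small spheres around the diagonals times fundamental classes of copies of $\Sigma_g$ — so that exactly one monomial in the expansion of the product pairs nontrivially. Once the lower bound $\tc\ge 2n$ (resp.\ $2n+2$) is in hand, combining with the established upper bound and with the trivial lower bound $\tc(F(\Sigma_g,n))\ge\tc(\Sigma_g)=4$ when $g\ge 2$ (which handles small $n$) closes all cases.
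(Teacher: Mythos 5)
Your skeleton --- matching upper bounds against zero-divisor cup-length lower bounds extracted from Totaro's description of $H^*(F(\Sigma_g,n);\Q)$ --- is the paper's skeleton, and your genus-zero reduction via $F(S^2,3)\cong\PSL(2,\C)\simeq\SO(3)$ is exactly what the paper does. But there are three places where what you wrote would not go through as stated.

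First, the upper bounds. You invoke ``subadditivity of $\secat$ over the relevant product/fibration decomposition,'' but $\tc$ is not subadditive over fibrations in general; the only available tool is the product inequality $\tc(X\times Y)\le\tc(X)+\tc(Y)-1$, and it needs an honest product. The paper gets genuine products in exactly two cases: $F(S^2,n)\cong\PSL(2,\C)\times F(S^2\smallsetminus Q_3,n-3)$ by normalizing three points with a M\"obius transformation, and $F(T,n)\cong T\times F(T\smallsetminus Q_1,n-1)$ using the group structure of the torus. For $g\ge 2$ no such splitting exists, and the paper does not use the fibration over $\Sigma_g$ at all for the upper bound: it shows the surface pure braid group is (an extension by $\pi_1(\Sigma)$ of) an iterated semidirect product of free groups, hence has geometric dimension $n+1$, so $F(\Sigma,n)$ is homotopy equivalent to an $(n+1)$-complex and $\tc\le 2(n+1)+1=2n+3$. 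Without that dimension reduction you only get $\tc\le 4n+1$. The same device (dimension $n-1$ for $F(T\smallsetminus Q_1,n-1)$) is what makes the torus upper bound close.

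Second, the lower bounds. Your proposed zero-divisor product uses classes $\bar G_{i,i+1}$ coming from the degree-one diagonal generators $\alpha_{ij}$ of $H^*(F(\R^2,n))$. For a \emph{closed} surface of genus $\ge 1$ these classes do not survive: the first differential sends $\alpha_{ij}\mapsto p_{ij}^*\Delta$, and the classes $\Delta_{ij}$ are linearly independent in $H^2(\Sigma^{\times n};\Q)$, so $H^1(F(\Sigma,n);\Q)\cong H^1(\Sigma^{\times n};\Q)$ and there are no diagonal classes in degree one to multiply. The paper's product consists entirely of $2n$ (resp.\ $2n+2$) zero-divisors built from classes pulled back from $H^1(\Sigma^{\times n})$ --- for $g\ge 2$ one takes $\bar a_1\bar b_1\bar c_1\bar d_1\prod_{j\ge 2}\bar x_j\bar y_j$, using four independent one-dimensional classes at the first point, which is where genus $\ge 2$ enters --- and nonvanishing is verified not by pairing with homology but by exhibiting an explicit monomial basis of the quotient algebra $H^*(\Sigma^{\times n})/\langle\Delta_{ij}\rangle$ (which Totaro's degeneration identifies with a subalgebra of $H^*(F(\Sigma,n);\Q)$) and isolating a single surviving summand. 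Your ``evaluate on a homology class'' idea is the right spirit but is the entire content of the proof and is left undone.

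Third, a smaller but real point: for $S^2$ with $n\ge 3$, rational coefficients cannot give the sharp lower bound, since $H^*(\SO(3);\Q)\cong H^*(S^3;\Q)$ has zero-divisor cup length $1$; the paper must pass to $\Z_2$ coefficients, where $H^*(\SO(3);\Z_2)\cong\Z_2[\alpha]/(\alpha^4)$ has zero-divisor cup length $3$, to reach $\zcl\ge 2n-3$ and hence $\tc\ge 2n-2$.
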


As indicated, the topological complexity of the configuration space varies depending on the genus of the surface.  After recalling a number of necessary properties and discussing the technical tools we will use in \S\ref{sec:prelim}, we analyze the configuration space of the  sphere in \S\ref{sec:sphere}; the configuration space of the torus in \S\ref{sec:torus}; and the configuration space of a surface of higher genus in \S\ref{sec:big g}. 
We also determine the topological complexity of the configuration space of $n$ ordered points on a punctured surface, see \S\ref{sec:punctures}.  

For small values of $n$, our results summarize known facts concerning the topological complexity of orientable surfaces themselves. If $n=1$, we have $F(X,1)=X$ for any space $X$. Let $S^2$ be the $2$-dimensional sphere, $T=S^1\times S^1$ the torus, and denote a surface of genus $g \ge 2$ by $\Sigma=\Sigma_g$.  Then,
$\tc(S^2)=3$, $\tc(T)=3$, and $\tc(\Sigma)=5$
as indicated above.  Furthermore, there is a homotopy equivalence $F(S^2,2)\simeq S^2$, so
$\tc(F(S^2,2))=3$ as well.
Regarding punctured surfaces,
the complement of $m$ points on a surface has the homotopy type of a either a point~$*$
(with $\tc(*)=1$), a circle $S^1$ (with $\tc(S^1)=2$), or a bouquet $\bigvee_r S^1$ of $r\ge 2$ circles (with $\tc(\bigvee_r S^1)=3$).
Refer to the survey \cite{Fa05} for discussion of these and other relevant results.

It is interesting to compare the results stated above with the topological complexity of the Cartesian product $(\Sigma_g)^{\times n}$.  
Using the arguments described in \cite[pp. 115--116]{FInv}, one easily obtains
\[
\tc((\Sigma_g)^{\times n}) = \begin{cases}
2n+1 &\text{if $g=0$ or $g=1$,}\\
4n+1&\text{if $g\ge 2$}.
\end{cases}
\]
Thus, on a surface of high genus, the complexity of the collision-free motion planning problem for $n$ distinct points is roughly half of the complexity of the similar problem when the points are allowed to collide. This seems to contradict our intuitive understanding of the relative complexity of the two problems mentioned above, and may be viewed as a manifestation of the fact that the concept $\tc(X)$ reflects only part of the {\it \lq\lq real\rq\rq} complexity of the problem.

\section{Preliminaries} \label{sec:prelim}

In this section, we record a number of relevant properties of topological complexity, and discuss some relevant results on the cohomology of configuration spaces.

The topological complexity $\tc(X)$ depends only on the homotopy type of $X$, and satisfies the inequality
\begin{equation} \label{eq:upper bound}
\tc(X) \le 2 \dim X + 1,
\end{equation}
where $\dim(X)$ denotes the covering dimension of $X$, see \cite[\S\S2--3]{Fa03}. 

Recall that the spaces we consider in this paper are, up to homotopy type, finite CW complexes. For two such spaces $X$ and $Y$, as shown in \cite[Thm. 11]{Fa03}, the topological complexity of the Cartesian product admits the upper bound
\begin{equation} \label{eq:product}
\tc(X\times Y) \le \tc(X) + \tc(Y)-1.
\end{equation}

 If $A=\bigoplus_{k=0}^\ell A^k$ is a graded algebra over a field $\k$, with $A^k$ finite-dimensional for each $k$, define $\cl A$, the cup length of $A$, to be the largest integer $q$ for which there are homogeneous elements $a_1,\dots,a_q$ of positive degree in $A$ such that $a_1\cdots a_q \neq 0$.

The tensor product $A \otimes A$ has natural graded algebra structure, with multiplication given by $(u_1\otimes v_1)\cdot (u_2\otimes v_2)=(-1)^{|v_1|\cdot |u_2|} u_1 u_2 \otimes v_1 v_2$.  Multiplication in $A$ defines an algebra homomorphism $\mu\colon A \otimes A \to A$.  The zero-divisor cup length of $A$, denoted by $\zcl A$, is the cup length of the ideal $Z=\ker(\mu)$ of zero-divisors. It is a straightforward exercise to verify that the zero-divisor cup length has the properties listed below.

\begin{lemma} \label{lem:zcl estimates}
Let $A$ and $B$ be graded unital algebras over a field $\k$.
\begin{enumerate}
\item \label{item:subalgebra}
If $B$ is a subalgebra of $A$, then $\zcl A \ge \zcl B$.
\item \label{item:epi image}
If $B$ is an epimorphic image of $A$, then $\zcl A \ge \zcl B$.
\item \label{item:tensor product}
The tensor product $A \otimes B$ satisfies $\zcl A\otimes B \ge \zcl A + \zcl B$.
\end{enumerate}
\end{lemma}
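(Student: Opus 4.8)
The plan is to reduce each of the three assertions to elementary linear algebra over the field $\k$, using that $\zcl C = \cl(Z_C)$ where $Z_C = \ker(\mu_C\colon C\otimes C\to C)$ is a graded (non-unital) subalgebra of $C\otimes C$, and that over a field tensoring preserves injectivity and a tensor product of nonzero elements is nonzero. In each case I would exhibit, inside the larger zero-divisor algebra, a product of homogeneous positive-degree elements mirroring a product that witnesses the smaller $\zcl$. The only step needing genuine care is a surjectivity statement hidden in part (\ref{item:epi image}); the rest is bookkeeping with gradings and Koszul signs.

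For part (\ref{item:subalgebra}), if $B\subseteq A$ is a subalgebra then $B\otimes B$ is a graded subalgebra of $A\otimes A$ (the inclusion is injective because $\k$ is a field), and $\mu_A|_{B\otimes B}$ equals $\mu_B$ followed by the injection $B\hookrightarrow A$; hence $Z_B = Z_A\cap(B\otimes B)$ is a subalgebra of $Z_A$, a nonzero product of homogeneous positive-degree elements of $Z_B$ stays nonzero in $Z_A$, and so $\zcl A\ge\zcl B$. For part (\ref{item:epi image}), let $\varphi\colon A\twoheadrightarrow B$ be a graded algebra epimorphism; then $\varphi\otimes\varphi$ is a graded epimorphism $A\otimes A\to B\otimes B$ satisfying $\mu_B\circ(\varphi\otimes\varphi) = \varphi\circ\mu_A$, so it maps $Z_A$ into $Z_B$. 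The key claim is that it maps $Z_A$ \emph{onto} $Z_B$: given homogeneous $z\in Z_B$, choose a homogeneous preimage $w\in A\otimes A$ of the same degree and replace it by $w' = w - \mu_A(w)\otimes 1$, which lies in $Z_A$ because $\mu_A(w)\in\ker\varphi$, and still maps to $z$. Lifting a witnessing product in $Z_B$ to homogeneous positive-degree elements of $Z_A$ and pushing forward shows the lifted product is nonzero, so again $\zcl A\ge\zcl B$.

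For part (\ref{item:tensor product}), I would use the canonical graded algebra isomorphism $\Theta\colon(A\otimes B)\otimes(A\otimes B)\xrightarrow{\;\sim\;}(A\otimes A)\otimes(B\otimes B)$, carrying the Koszul sign $(-1)^{|b_1|\,|a_2|}$, under which $\mu_{A\otimes B}$ becomes $\mu_A\otimes\mu_B$; thus $Z_{A\otimes B}$ is isomorphic as a graded algebra to $\ker(\mu_A\otimes\mu_B)$. The factor inclusions $x\mapsto x\otimes(1\otimes 1)$ and $y\mapsto(1\otimes 1)\otimes y$ embed $Z_A$ and $Z_B$ as subalgebras of $\ker(\mu_A\otimes\mu_B)$ supported on complementary tensor factors. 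Given witnessing products $\alpha_1\cdots\alpha_p\ne 0$ in $Z_A$ with $p=\zcl A$ and $\beta_1\cdots\beta_q\ne 0$ in $Z_B$ with $q=\zcl B$, the images of the $\alpha_i$ followed by the images of the $\beta_j$ multiply to $(\alpha_1\cdots\alpha_p)\otimes(\beta_1\cdots\beta_q)$, which is nonzero over $\k$; since these are $p+q$ homogeneous elements of positive degree in $Z_{A\otimes B}$, we conclude $\zcl(A\otimes B)\ge\zcl A+\zcl B$. The main obstacle, as noted, is the verification in part (\ref{item:epi image}) that zero-divisors lift to zero-divisors rather than merely to elements of $A\otimes A$ — precisely the role of the correction term $-\mu_A(w)\otimes 1$.
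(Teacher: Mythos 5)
Your argument is correct: part (1) uses that $Z_B=Z_A\cap(B\otimes B)$ under the injection $B\otimes B\hookrightarrow A\otimes A$, part (2) correctly handles the only delicate point by showing a homogeneous zero-divisor in $B\otimes B$ lifts to one in $A\otimes A$ via the correction $w-\mu_A(w)\otimes 1$, and part (3) correctly identifies $\mu_{A\otimes B}$ with $\mu_A\otimes\mu_B$ and uses that a tensor product of nonzero elements over a field is nonzero. The paper leaves this lemma as a "straightforward exercise" and gives no proof, and your verification is exactly the standard one intended.
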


By \cite[Thm.~7]{Fa03} the topological complexity of $X$ is larger than the {zero-divisor cup length} of the cohomology algebra $A=H^*(X;\k)$,
\begin{equation} \label{eq:lower bound}
\tc(X) \ge \zcl H^*(X;\k) + 1.
\end{equation}

For configuration spaces of ordered points on surfaces, topological considerations, together with the inequalities \eqref{eq:upper bound} and \eqref{eq:product}, yield upper bounds on the topological complexity.  These bounds are shown to be sharp using the cohomological lower bound \eqref{eq:lower bound}.  This requires some understanding of the structure of the cohomology ring of the configuration space.

The cohomology of the configuration space $F(X,n)$ of $n$ distinct ordered points in $X$ has been the object of a great deal of study, particularly for $X$ a manifold.  See, for instance, \cite{To} and the references therein. In the case where $X$ is a Euclidean space, the structure of the ring $H^*(F(\R^m,n);\Z)$ was determined by Arnol'd \cite{Ar} (for $n=2$) and Cohen \cite{Co}.  This ring has generators $\alpha_{i,j}^{}$, $1\le i,j\le n$, $i\neq j$, of degree $m-1$, and relations $\alpha_{i,j}^{}=(-1)^m\alpha_{j,i}^{}$, $\alpha_{i,j}^2=0$, and
$\alpha_{i,j}^{}\alpha_{i,k}^{}+\alpha_{j,k}^{}\alpha_{j,i}^{}+\alpha_{k,i}^{}\alpha_{k,j}^{}=0$
for distinct $i,j,k$. For general $X$, recall that $X^{\times n}=X\times \dots \times X$ ($n$ times) denotes the Cartesian product.  The structure of $H^*(F(\R^m,n);\Z)$
plays a significant role in the Leray spectral sequence of the inclusion $F(X,n) \hookrightarrow X^{\times n}$, analyzed by Cohen and Taylor \cite{CohenTaylor} and Totaro \cite{To}.  

Let $X$ be an oriented real manifold of dimension $m$, and let $\Delta \in H^m(X \times X;\k)$ be the cohomology class dual to the diagonal, where $\k$ is a field.  If $X$ is closed, $\omega \in H^m(X;\k)$ is a fixed generator, and $\set{\beta_i^{}}$ and $\set{\beta_i^*}$ are dual bases for $H^*(X;\k)$ satisfying $\beta_i^{}\cup \beta_j^* = \delta_{i,j}^{}\omega$, where $\delta_{i,j}^{}$ is the Kronecker symbol, the diagonal cohomology class may be expressed as
\[
\Delta=\sum (-1)^{|\beta_i^{}|}\beta_i^{} \times \beta_i^*,
\]
see \cite[Thm. 11.11]{Milnor}.

Let $p_i^{}\colon X^{\times n} \to X$ and $p_{i,j}^{}\colon X^{\times n} \to X \times X$ denote the natural projections, $p_i^{}(x_1,\dots,x_n)=x_i$ and $p_{i,j}^{}(x_1,\dots,x_n)=(x_i,x_j)$,
where $1\le i,j \le n$ and $i \neq j$.  Then, as shown by Cohen and Taylor \cite{CohenTaylor} and Totaro \cite{To}, the inclusion $F(X,n) \hookrightarrow X^{\times n}$ determines a Leray spectral sequence which converges to $H^*(F(X,n);\k)$. The initial term is the quotient of the algebra $H^*(X^{\times n};\k) \otimes H^*(F(\R^m,n);\k)$ by the relations $(p_i^*(x)-p_j^*(x))\otimes\alpha_{i,j}^{}$ for $i\neq j$ and $x \in H^*(X)$.  The first nontrivial differential is  given by $d \alpha_{i,j}^{}=p_{i,j}^*\Delta$.

The case where $X$ is a smooth, complex projective variety (specifically, a curve)
is of particular interest to us. In this instance, taking rational coefficients $\k=\Q$, Totaro \cite[Thm. 4]{To} shows that the differential in the spectral sequence described above is the only nontrivial differential.  Consequently,  the cohomology ring $H^*(F(X,n);\Q)$ is determined by $H^*(X;\Q)$.  The immediate degeneration of the spectral sequence also implies the following result, which we will use extensively.

\begin{proposition} \label{prop:totaro}
Let $X$ be a smooth projective variety
over $\C$ of real dimension $m$, let $H=H^*(X^{\times n};\Q)$, and let $I$ be the ideal in $H$ generated by the elements $$\Delta_{i,j}^{}=p^*_{i,j} \Delta\in H^{m}(X^{\times n};\Q)$$ for all $1\le i<j \le n$.  Then the quotient $H/I$ is a subalgebra of the rational cohomology ring of the configuration space $F(X,n)$. Thus, using Lemma \ref{lem:zcl estimates}, one obtains
\begin{equation*}
\tc(F(X, n)) \ge \zcl H/I +1.
\end{equation*}
\end{proposition}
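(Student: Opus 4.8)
The plan is to leverage Totaro's degeneration result to identify $H/I$ with a concrete subalgebra of $H^*(F(X,n);\Q)$, and then apply Lemma~\ref{lem:zcl estimates} together with the lower bound \eqref{eq:lower bound}. First I would recall that, by Totaro's theorem \cite[Thm.~4]{To}, for $X$ a smooth complex projective variety the Leray spectral sequence of the inclusion $F(X,n)\hookrightarrow X^{\times n}$ has a single nontrivial differential, namely $d\,\alpha_{i,j}^{}=p_{i,j}^*\Delta=\Delta_{i,j}^{}$. The $E_2$-page is the quotient of $H^*(X^{\times n};\Q)\otimes H^*(F(\R^m,n);\Q)$ by the relations $(p_i^*x-p_j^*x)\otimes\alpha_{i,j}^{}$, and after taking homology with respect to $d$ the spectral sequence collapses, so $E_\infty=E_3$ is the associated graded of a filtration on $H^*(F(X,n);\Q)$.

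Next I would isolate the relevant summand. Inside the $E_2$-page, the sub-object concentrated in filtration degree zero (i.e., the part not involving any $\alpha_{i,j}^{}$) is precisely the image of $H=H^*(X^{\times n};\Q)$. Under the differential $d$, the classes $\alpha_{i,j}^{}$ in filtration degree one map onto the elements $\Delta_{i,j}^{}\in H$. Hence in the filtration-zero part, passing to $d$-homology kills exactly the image of $d$ restricted to filtration degree one, which is the ideal $I$ generated by the $\Delta_{i,j}^{}$; no incoming differentials hit filtration degree zero. Therefore the filtration-zero piece of $E_\infty$ is exactly $H/I$. Since this is the bottom piece of the filtration on $H^*(F(X,n);\Q)$, it is a genuine subring — the lowest filtration layer of a multiplicative filtration is always a subalgebra, and here it coincides with its own associated graded since it sits in filtration degree zero. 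This gives the ring monomorphism $H/I\hookrightarrow H^*(F(X,n);\Q)$ claimed in the statement.

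Finally, the topological complexity estimate follows formally: by Lemma~\ref{lem:zcl estimates}\eqref{item:subalgebra}, a subalgebra inclusion gives $\zcl H^*(F(X,n);\Q)\ge \zcl H/I$, and then \eqref{eq:lower bound} yields $\tc(F(X,n))\ge \zcl H^*(F(X,n);\Q)+1\ge \zcl H/I+1$.

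The main obstacle is the middle step: justifying precisely that the filtration-zero layer of $E_\infty$ equals $H/I$ as an \emph{algebra} and that it embeds as a subring (not merely a subquotient) of $H^*(F(X,n);\Q)$. This requires care about the multiplicative structure of the Leray spectral sequence — checking that the filtration on $H^*(F(X,n);\Q)$ is a ring filtration, that $H$ surjects onto filtration degree zero of $E_2$ as algebras, and that the only relations introduced in passing to $E_\infty$ in this bottom layer come from the image of $d$, i.e. the ideal $I$. Everything else is a direct invocation of results already quoted in the excerpt.
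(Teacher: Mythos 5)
Your argument is correct and is essentially the reasoning the paper leaves implicit: Totaro's degeneration identifies the $q=0$ row of $E_\infty$ (the part free of the $\alpha_{i,j}$) with $H/I$, and since $E_\infty^{n,0}=F^nH^n(F(X,n);\Q)$ with $F^{n+1}H^n=0$, that row is the image of the restriction map $H^*(X^{\times n};\Q)\to H^*(F(X,n);\Q)$ and hence a genuine subalgebra, after which Lemma~\ref{lem:zcl estimates}\eqref{item:subalgebra} and \eqref{eq:lower bound} give the stated bound. Two cosmetic corrections: the unique nontrivial differential is $d_m$ (as $\alpha_{i,j}$ has fiber degree $m-1$), so $E_\infty=E_{m+1}$ rather than $E_3$ for general $m$ (these agree only for curves, the case actually used later), and the $q=0$ row lies in the \emph{deepest} piece of the decreasing Leray filtration --- which is exactly why it is a subobject rather than a quotient of the abutment --- so calling it ``filtration degree zero'' inverts the usual convention even though your conclusion is right.
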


We will also make frequent use of the classical Fadell--Neuwirth theorem \cite[Thm.~3]{FN}, which shows that for $m < n$, the projection $F(X,n) \to F(X,m)$ onto the first $m$ coordinates is a fibration, with fiber $F(X\smallsetminus Q_m,n-m)$.  Here, and throughout the paper, we use the symbol $Q_m$ to denote a collection of $m\ge 1$ distinct points in the topological space $X$.

\section{Genus zero} \label{sec:sphere}
In this section, we determine the topological complexity of the configuration space of $n$ ordered points on the sphere $S^2$.  Since $F(S^2,1)=S^2$ and $F(S^2,2)$ is equivalent to the tangent bundle over $S^2$, hence has the homotopy type of $S^2$, we have $\tc(F(S^2,n))=3$ for $n\le 2$.  For larger $n$, the following holds.

\begin{theorem} \label{thm:sphere}
For $n\ge 3$, the topological complexity of the configuration space of $n$ distinct ordered points on the sphere is
\[
\tc(F(S^2,n))=2n-2.
\]
\end{theorem}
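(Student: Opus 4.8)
The plan is to establish $\tc(F(S^2,n))\le 2n-2$ and $\tc(F(S^2,n))\ge 2n-2$ separately, using throughout the classical homeomorphism
\[
F(S^2,n)\;\cong\;\PSL(2,\C)\times F\bigl(\C\smallsetminus\{0,1\},\,n-3\bigr),\qquad n\ge 3,
\]
which sends $(x_1,\dots,x_n)$ to $\bigl(g,\,(g(x_4),\dots,g(x_n))\bigr)$, where $g\in\PSL(2,\C)$ is the unique M\"obius transformation carrying $(x_1,x_2,x_3)$ to $(0,1,\infty)$. Set $Y=F(\C\smallsetminus\{0,1\},n-3)$ (so $Y$ is a point when $n=3$). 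Since $\PSL(2,\C)$ deformation retracts onto $\SO(3)\cong\R P^3$, I will use the standard facts that $\tc(\R P^3)=4$ and that $\zcl H^*(\R P^3;\Z/2)=3$; the latter holds because $H^*(\R P^3;\Z/2)=\Z/2[x]/(x^4)$, so the zero-divisor $x\otimes 1+1\otimes x$ has non-zero cube.

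For the upper bound, I would observe that iterating the Fadell--Neuwirth fibration $F(\C\smallsetminus\{0,1\},j)\to F(\C\smallsetminus\{0,1\},j-1)$, whose fibers $\C\smallsetminus Q_{j+1}$ are wedges of circles and hence one-dimensional, shows that $Y$ has the homotopy type of a CW complex of dimension $n-3$. Then \eqref{eq:upper bound} gives $\tc(Y)\le 2(n-3)+1$, and combining this with $\tc(\PSL(2,\C))=4$ through the product inequality \eqref{eq:product} yields
\[
\tc(F(S^2,n))\;\le\;4+\bigl(2(n-3)+1\bigr)-1\;=\;2n-2 ,
\]
which for $n=3$ reduces to $\tc(\R P^3)=4$.

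For the lower bound I would work over $\Z/2$; rational coefficients cannot suffice, since $\R P^3$ has the rational cohomology of $S^3$. By the K\"unneth theorem $H^*(F(S^2,n);\Z/2)\cong H^*(\R P^3;\Z/2)\otimes H^*(Y;\Z/2)$, so Lemma~\ref{lem:zcl estimates}\,(\ref{item:tensor product}) together with \eqref{eq:lower bound} reduces the task to proving
\[
\zcl H^*(Y;\Z/2)\;\ge\;2(n-3).
\]
The ring $H^*(Y;\Q)$ is accessible through Totaro's theorem \cite{To}: applied to $\PP^1$ it determines $H^*(F(S^2,n);\Q)$, and splitting off the $\PSL(2,\C)$ tensor factor identifies $H^*(Y;\Q)$; as $Y$ has torsion-free cohomology (being a fiber-type arrangement complement), this also gives the ring structure of $H^*(Y;\Z/2)$.

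The decisive step, and the main obstacle, is the inequality $\zcl H^*(Y;\Z/2)\ge 2(n-3)$. Since $H^*(Y)$ vanishes above degree $n-3$, one has $\zcl H^*(Y;\Z/2)\le 2(n-3)$ automatically, so what is needed is an equality: one must exhibit $2(n-3)$ zero-divisors of degree one whose product is a non-zero class in the top degree $2(n-3)$ of $H^*(Y\times Y;\Z/2)$. This is the twice-punctured-plane analogue of the computation behind $\tc(F(\R^2,n))=2n-2$ in \cite{FY}; I would expect each of the $n-3$ free points to contribute a pair of zero-divisors, assembled from the Arnol'd-type generators $\alpha_{i,j}$ together with the two classes attached to the punctures $0$ and $1$, and the combinatorial heart of the matter is to choose these $2(n-3)$ classes so that no cancellation occurs when they are multiplied out in the model furnished by Totaro's theorem.
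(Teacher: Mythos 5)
Your overall architecture is exactly that of the paper: the M\"obius-transformation homeomorphism $F(S^2,n)\cong \PSL(2,\C)\times F(\R^2\smallsetminus Q_2,\,n-3)$, the upper bound $4+(2n-5)-1=2n-2$ via the product inequality \eqref{eq:product} (your route to $\tc(Y)\le 2n-5$ through the dimension bound \eqref{eq:upper bound} is a perfectly good substitute for the paper's citation of the exact value), and a lower bound over $\Z_2$ obtained by adding $\zcl H^*(\SO(3);\Z_2)=3$ to the zero-divisor cup length of the punctured-plane factor via Lemma~\ref{lem:zcl estimates}.

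The genuine gap is the step you yourself flag as ``the decisive step, and the main obstacle'': the inequality $\zcl H^*(F(\R^2\smallsetminus Q_2,n-3);\Z_2)\ge 2(n-3)$ is asserted as an expectation, not proved. Everything else in your argument is routine, so without this inequality you have only $\tc(F(S^2,n))\le 2n-2$ and a lower bound of $3+1=4$, which is far from sharp for large $n$. This is not a step that would fail --- it is true --- but it is precisely the nontrivial cohomological content of the theorem, and ``choose the classes so that no cancellation occurs'' is the entire difficulty: in the top degree $n-3$ of $H^*(F(\R^2\smallsetminus Q_2,n-3))$ there are many relations among products of the Arnol'd-type generators, and one must exhibit an explicit product of $2(n-3)$ degree-one zero-divisors together with a basis of the top-degree component in which one monomial of the expansion demonstrably survives. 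The paper closes this gap by citing Farber--Grant--Yuzvinsky \cite[\S 6]{FGY}, where exactly this computation is carried out integrally (yielding $\tc(F(\R^2\smallsetminus Q_2,k))=2k+1$ and $\zcl=2k$), and by observing that the same computation goes through verbatim with $\Z_2$ coefficients --- a point you would also need to address, since your K\"unneth/tensor argument forces you to work mod $2$ to see the $\SO(3)$ factor. Your proposal becomes complete once you either cite that result (checking the mod $2$ reduction) or reproduce the computation; also note that your detour through Totaro's theorem to obtain $H^*(Y;\Z_2)$ is unnecessary, since $F(\C\smallsetminus\{0,1\},n-3)$ is a fiber-type arrangement complement whose integral cohomology ring is given directly by an Orlik--Solomon presentation.
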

\begin{proof}
Fix a triple of pairwise distinct points $P_1, P_2, P_3\in S^2$, and consider the group $\PSL(2,\C)$ of M\"obius transformations acting on the sphere $S^2=\PP^1$. For any ordered triple of pairwise distinct points $(x_1, x_2, x_3)\in S^2\times S^2\times S^2$, there is a unique M\"obius transformation $A\colon S^2 \to S^2$ satisfying $Ax_i=P_i$ for $i=1, 2, 3$, and depending continuously on $x_1,x_2,x_3$.  
This yields homeomorphisms $F(S^2,3) \to \PSL(2,\C)$, $(x_1,x_2,x_3)\mapsto A$, and, 
for $n\ge 4$, 
\[
F(S^2, n)\to \PSL(2, \C) \times F(S^2\smallsetminus Q_3, n-3),
\]
sending a configuration $(x_1, \dots, x_n)$ to $(A, (y_4, \dots, y_{n}))$. Here $Q_3=\{P_1, P_2, P_3\}$, the transformation $A$ is defined as above, and $y_i=Ax_i$ for $i=4, \dots, n$.
Noting that $\PSL(2,\C)$ deformation retracts onto $\SO(3)$ and  that $F(S^2\smallsetminus Q_3) =F(\R^2-Q_2, n-3)$, we obtain homotopy equivalences 
$F(S^2,3)\simeq \SO(3)$, and, for $n\ge 4$,  
\begin{equation} \label{eq:sphere htpy}
F(S^2,n) \simeq \SO(3) \times F(\R^2\smallsetminus Q_2,n-3).
\end{equation}

The topological complexity of the (connected) Lie group $\SO(3)$ is
$\tc(SO(3))=\cat(\SO(3))=4$, see \cite[Lem. 8.2]{Fa04}.  
This finishes the proof for $n=3$.  For $n\ge 4$, it is also 
known that $\tc(F(\R^2\smallsetminus Q_2,n-3))=2n-5$, see \cite[Thm. 6.1]{FGY}.
These facts and the product inequality \eqref{eq:product} imply
\[
\tc(F(S^2,n)) \le \tc(\SO(3)) + \tc(F(\R^2\smallsetminus Q_2,n-3)-1 = 4 + 2n-5 -1=2n-2.
\]

To complete the proof, by \eqref{eq:lower bound} it suffices to show that $\zcl H^*(F(S^2,n);\Z_2) \ge 2n-3$.  As indicated, we will consider cohomology with $\Z_2$ coefficients. From the homotopy equivalence \eqref{eq:sphere htpy}, we have
\[
H^*(F(S^2,n);\Z_2) \cong H^*(\SO(3);\Z_2) \otimes H^*(F(\R^2\smallsetminus Q_2,n-3);\Z_2).
\]
As noted in Lemma \ref{lem:zcl estimates}, it follows that
\[
\zcl H^*(F(S^2,n);\Z_2) \ge \zcl H^*(\SO(3);\Z_2) \otimes \zcl H^*(F(\R^2\smallsetminus Q_2,n-3);\Z_2).
\]
It is readily checked that the zero-divisor cup length of $H^*(\SO(3);\Z_2) \cong \Z_2[\alpha]/(\alpha^4)$ is equal to $3$.  For the configuration space, the zero-divisor cup length of the integral cohomology ring $H^*(F(\R^2\smallsetminus Q_2,n-3);\Z)$ was computed in \cite[\S6]{FGY}.  Repeating this computation with $\Z_2$ coefficients yields the same result,
\[
\zcl H^*(F(\R^2\smallsetminus Q_2,n-3);\Z_2) =\zcl H^*(F(\R^2\smallsetminus Q_2,n-3);\Z)=2(n-3).
\]
It follows that $\zcl H^*(F(S^2,n)) \ge 3 + 2(n-3)=2n-3$, as required.
\end{proof}

\begin{remark} For $n\ge 3$, the topological complexity of $F(S^2,n)$ coincides with that of $F(\R^2,n)$, the configuration space of $n$ distinct ordered points in the plane.  See Farber and Yuzvinsky \cite{FY} for the calculation of the latter.
\end{remark}

\section{Genus one} \label{sec:torus}

\begin{theorem} \label{thm:torus}
The topological complexity of the configuration space of $n$ distinct ordered points on the torus is
\[
\tc(F(T,n))=2n+1.
\]
\end{theorem}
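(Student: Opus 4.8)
The plan is to establish the equality $\tc(F(T,n)) = 2n+1$ by proving matching upper and lower bounds. For the upper bound, I would first observe that $F(T,n)$ is an open subset of the $2n$-manifold $T^{\times n}$, so $\dim F(T,n) \le 2n$ and inequality \eqref{eq:upper bound} gives $\tc(F(T,n)) \le 4n+1$, which is too weak. Instead I would exploit the Fadell--Neuwirth fibration: since $T$ is a topological group, the projection $F(T,n) \to T$ onto the first coordinate admits a section (translate so that the first point sits at the identity), and in fact $F(T,n)$ is homeomorphic to $T \times F(T\smallsetminus Q_1, n-1)$. Now $T\smallsetminus Q_1$ is an open surface, homotopy equivalent to a wedge of circles, and $F(T\smallsetminus Q_1,n-1)$ is an open subset of a $(2n-2)$-manifold that is homotopy equivalent to a CW complex of dimension $n-1$ (being an ordered configuration space of a non-compact surface, i.e.\ of an open $2$-manifold, hence aspherical of dimension $n-1$ by the iterated Fadell--Neuwirth fibrations with fibers punctured surfaces homotopy equivalent to wedges of circles). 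Thus by \eqref{eq:upper bound}, $\tc(F(T\smallsetminus Q_1,n-1)) \le 2(n-1)+1 = 2n-1$, and since $\tc(T)=3$, the product inequality \eqref{eq:product} yields
\[
\tc(F(T,n)) \le \tc(T) + \tc(F(T\smallsetminus Q_1,n-1)) - 1 \le 3 + (2n-1) - 1 = 2n+1.
\]

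For the lower bound I would apply Proposition~\ref{prop:totaro} with $X = T$, the elliptic curve, which is a smooth projective curve over $\C$ of real dimension $m=2$. Write $H = H^*(T^{\times n};\Q)$. With $H^*(T;\Q) = \Lambda(a,b)$ an exterior algebra on two degree-one generators with $ab = \omega$ the fundamental class, let $a_i = p_i^* a$ and $b_i = p_i^* b$. The diagonal class in $H^2(T\times T;\Q)$ is $\Delta = a\times b - b\times a$ (up to sign), so $\Delta_{i,j} = a_i b_j - b_i a_j$. I must compute $\zcl(H/I)$ where $I = (\Delta_{1,2},\dots,\Delta_{n-1,n})$, or rather exhibit a nonzero product of $2n$ positive-degree zero-divisors in $H/I$. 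The natural candidates for zero-divisors are the elements $\bar a_i = a_i \otimes 1 - 1 \otimes a_i$ and $\bar b_i = b_i \otimes 1 - 1 \otimes b_i$ in $(H/I) \otimes (H/I)$ (using the description of $\zcl$ via the kernel of multiplication); one expects the product $\prod_{i=1}^{n} \bar a_i \bar b_i$, or a slight variant with one factor replaced to account for the relations, to be nonzero, giving $\zcl(H/I) \ge 2n$ and hence $\tc(F(T,n)) \ge 2n+1$ by Proposition~\ref{prop:totaro} combined with \eqref{eq:lower bound}.

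The main obstacle will be the lower-bound cohomology computation: verifying that the candidate product of zero-divisors survives in the quotient $(H/I)\otimes(H/I)$ and is genuinely nonzero. This requires a concrete understanding of a monomial basis for $H/I$ — that is, understanding the ideal $I$ generated by the relations $a_i b_j - b_i a_j = 0$ inside the exterior-type algebra $H = \Lambda(a_1,b_1,\dots,a_n,b_n)$ — and then tracking how the zero-divisor product expands. A useful reduction is that modulo $I$ one can "collapse" the diagonal directions: the relations force certain combinations of $a_i, b_j$ to agree, so $H/I$ is closely related to the cohomology of the quotient configuration space, and one can try to find an explicit top-dimensional nonzero monomial that the product $\prod \bar a_i \bar b_i$ hits with coefficient $\pm 1$. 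I would carry this out by expanding $\prod_{i=1}^n \bar a_i \bar b_i = \prod_{i=1}^n (a_i\otimes 1 - 1\otimes a_i)(b_i \otimes 1 - 1 \otimes b_i)$, collecting the term whose left tensor factor is $a_1 b_1 \cdots a_{n-1} b_{n-1}$ (a top class of $T^{\times(n-1)} \times \{\text{pt}\}$ inside $H/I$) and whose right factor is $a_n b_n$, and checking this term is nonzero in $(H/I)^{\otimes 2}$ and is not cancelled by other terms or killed by the relations; the sign bookkeeping from the graded-commutative tensor product and the anticommuting one-dimensional generators is where care is needed.
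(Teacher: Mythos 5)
Your upper bound argument is correct and is essentially the paper's: the translation homeomorphism $F(T,n)\cong T\times F(T\smallsetminus Q_1,n-1)$, the fact that $F(T\smallsetminus Q_1,n-1)$ is aspherical with the homotopy type of an $(n-1)$-complex, and the inequalities \eqref{eq:upper bound} and \eqref{eq:product} give $\tc(F(T,n))\le 2n+1$. The lower bound, however, has two genuine problems. First, your diagonal class is wrong: for the torus
\[
\Delta=\omega\times 1+1\times\omega+b\times a-a\times b=(1\times a-a\times 1)(1\times b-b\times 1),
\]
not $a\times b-b\times a$; you have dropped the contributions of the basis elements $1$ and $\omega$ to $\sum(-1)^{|\beta_i|}\beta_i\times\beta_i^*$. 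The correct relations are therefore $\Delta_{i,j}=(a_j-a_i)(b_j-b_i)=\omega_i+\omega_j-a_ib_j-a_jb_i$, not $a_ib_j+a_jb_i$. This is fatal as written: the ideal $I'$ generated by your elements neither contains nor is contained in the Totaro ideal $I$, so $H/I'$ is neither a subalgebra nor an epimorphic image of $H^*(F(T,n);\Q)$, and Proposition \ref{prop:totaro} together with Lemma \ref{lem:zcl estimates} tells you nothing about $\zcl H/I'$. Second, even after correcting the ideal, the specific summand you propose to isolate --- left tensor factor $a_1b_1\cdots a_{n-1}b_{n-1}$, right factor $a_nb_n$ --- vanishes in $A_T=H/I$ as soon as $n\ge 3$: in the exterior algebra $H$ one has $\omega_1\Delta_{1,2}=\omega_1\omega_2$, so $a_1b_1a_2b_2=\omega_1\omega_2\in I$. (For $n\ge 4$ this is forced on general grounds: $F(T,n)$ has the homotopy type of an $(n+1)$-complex, so the subalgebra $A_T$ vanishes above degree $n+1$ and no term of bidegree $(2n-2,2)$ can survive.)

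The repair is exactly the step your outline flags as the ``main obstacle'' but does not carry out. Change basis to $x_1=a_1$, $y_1=b_1$, $x_j=a_j-a_1$, $y_j=b_j-b_1$ for $j\ge 2$; then $I$ is generated by the quadrics $x_jy_j$ ($2\le j\le n$) and $x_jy_i+x_iy_j$ ($2\le i<j\le n$) as in \eqref{eq:xygens}, which yields the explicit monomial basis \eqref{eq:A basis} of $A_T$. Your candidate product is in fact the right one: since degree-one elements of $A_T\otimes A_T$ anticommute over $\Q$ and the change of basis is unimodular, $\prod_i\bar a_i\bar b_i=\pm\prod_i\bar x_i\bar y_i$. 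But the nonvanishing is detected not by a top-degree left factor; it is detected by the balanced summand $\pm\,y_1y_2\cdots y_n\otimes x_1x_2\cdots x_n$ of bidegree $(n,n)$, which is a basis element of $A_T\otimes A_T$ that no other term of the expansion can produce. Without the normal form \eqref{eq:A basis} (or an equivalent linear-independence statement in $A_T$) you cannot certify that your chosen coefficient is uncancelled, so as it stands the lower bound is not established.
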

\begin{proof}
For $n=1$, since $F(T,1)=T$, we have $\tc(F(T,1))=3$ as noted previously.   So assume that $n\ge 2$.

Since $T=S^1 \times S^1$ is a group, we have
\[
F(T,n) \cong T \times F(T\smallsetminus Q_1,n-1).
\]
Explicitly, view $S^1$ as the set of complex numbers of length one, and let $Q_1=
\set{(1,1)} \in T$.  It is then readily checked that the map $T \times F(T\smallsetminus Q_1,n-1) \to F(T,n)$ defined by
\[
 \bigl((u,v),\bigl((z_1,w_1),\dots,(z_{n-1},w_{n-1})\bigr)\bigr) \mapsto
\bigl((u,v),(uz_1,vw_1),\dots,(uz_{n-1},vw_{n-1})\bigr)
\]
is a homeomorphism.

Using Fadell--Neuwirth fibrations, one can show that
$F(T\smallsetminus Q_1,n-1)$ is an Eilenberg-Mac\,Lane space of type $K(G,1)$,
where $G=\pi_1(F(T\smallsetminus Q_1,n-1))$ is the pure braid group of $T\smallsetminus Q_1$.  Since the group $G$ is an iterated semidirect product of free groups \cite{Be,GG}, the space $F(T\smallsetminus Q_1,n-1)$ has the homotopy type of a cell complex of dimension $n-1$, see \cite[\S1.3]{CSchain}. So $\tc(F(T\smallsetminus Q_1,n-1))\le 2n-1$ by \eqref{eq:upper bound}, and the product inequality \eqref{eq:product} yields
\[
\tc(F(T,n)) \le \tc(T) + \tc(F(T\smallsetminus Q_1,n-1)) = 3 + 2n-1 - 1 = 2n+1.
\]

By \eqref{eq:lower bound}, it suffices to show that $\zcl H^*(F(T,n);\Q) \ge 2n$.  We establish this using
the Leray spectral sequence of the inclusion $F(T,n) \hookrightarrow T^{\times n}$ developed by Totaro~\cite{To}.
Since we use rational coefficients throughout the argument, we subsequently suppress coefficients and write $H^*(X)=H^*(X;\Q)$ for brevity. Let $a,b\in H^1(T)$ be the generators of $H^*(T)$.  Note that the diagonal class $\Delta \in H^2(T\times T)$ is given by
\[ 
\Delta=ab\times 1 + 1\times ab + b\times a-a\times b = (1\times a-a\times 1)(1\times b-b\times 1).
\]

Let $H_T=H^*(T^{\times n})=[H^*(T)]^{\otimes n}$. Note that $H_T$ is an exterior algebra.
Denote the generators of $H_T$ by $a_i,b_i$, $1\le i \le n$, where $u_i=1\times\dots\times \overset{i}{u} \times \dots \times 1$. Let $I_T$ be the ideal in $H_T$ generated by the elements $\Delta_{i,j}=p^*_{i,j}\Delta$, $1\le i<j\le n$, and observe that, in this notation, we have
\[
\Delta_{i,j}=(a_j-a_i)(b_j-b_i).
\]

Realizing $T$ as a smooth, complex projective curve, Proposition \ref{prop:totaro} implies that the algebra
\begin{equation} \label{eq:AT}
A_T=H_T/I_T
\end{equation}
is a subalgebra of $H^*(F(T,n))$.  Since $\zcl H^*(F(T,n))\ge \zcl A_T$ by Lemma \ref{lem:zcl estimates},
it is enough to show that $\zcl A_T\ge 2n$.

Introduce a new basis for $H_T$ as follows:
\[
x_j=\begin{cases}
a_1&\text{if $j=1$,}\\
a_j-a_1&\text{if $2\le j\le n$,}
\end{cases}
\qquad
y_j=\begin{cases}
b_1&\text{if $j=1$,}\\
b_j-b_1&\text{if $2\le j\le n$.}
\end{cases}
\]
In this basis, $\Delta_{1,j}=x_j y_j$ and $\Delta_{i,j}=x_jy_j-x_jy_i-x_iy_j+x_iy_i$ for $i>1$.
Consequently, the ideal $I_T$ is given by
\begin{equation}\label{eq:xygens}
I_T=\langle x_jy_j\ (2\le j \le n),\ x_jy_i+x_iy_j\ (2\le i<j \le n)\rangle
\end{equation}
Since $I_T$ is generated in degree $2$, we abuse notation and 
denote the generators of $A_T=H_T/I_T$ by $x_i,y_i$, $1\le i\le n$.
From the description \eqref{eq:xygens} of the ideal $I_T$, monomials in $A_T$ cannot have any repetition of indices (at least $2$).  Additionally, the presence of $x_jy_i+x_iy_j$ in $I_T$ implies that any monomial in $A_T$ may be expressed, up to sign, as a monomial in which all $x$-indices are smaller than all $y$-indices (with the exception of~$1$).  Since such expressions are unique and nonzero in $A_T$, this algebra has basis
\begin{equation} \label{eq:A basis}
\set{x_1^{\epsilon_x}y_1^{\epsilon_y}x_J^{}y_K^{} \mid
\epsilon_x,\epsilon_y \in\set{0,1},\ J,K\subset [2,n],\ \max J<\min K},
\end{equation}
where $[2,n]=\set{2,3,\dots,n}$ and, for instance, $x_J=x_{j_1}\cdots x_{j_p}$ if $J=\set{j_1,\dots,j_p}$.

We now complete the proof by showing that the zero-divisor cup length of $A_T$ is $2n$.  Consider the zero-divisors $\bar{x}_j=x_j\otimes1-1\otimes x_j$ and $\bar{y}_j=y_j\otimes1-1\otimes y_j$, $1\le j\le n$, in $A_T\otimes A_T$.  We claim that their product is nonzero.  Note that $\bar{x}_j\bar{y}_j=y_j\otimes x_j-x_j\otimes y_j$ if $2\le j \le n$, while $\bar{x}_1\bar{y}_1=x_1y_1\otimes 1+
y_1\otimes x_1-x_1\otimes y_1+1\otimes x_1y_1$. So we have
\[
\prod_{j=1}^n \bar{x}_j\bar{y}_j=\bar{x}_1\bar{y}_1\prod_{j=2}^n(y_j\otimes x_j-x_j\otimes y_j)
=\bar{x}_1\bar{y}_1\sum_{J\subset [2,n]} \epsilon_J y_J x_{J^c}\otimes y_{J^c} x_J,
\]
where $\epsilon_J \in \set{1,-1}$ and $J^c=[2,n]\smallsetminus J$. In particular, the above sum includes the summand $(-1)^{n}y_2y_3\cdots y_n\otimes x_2x_3\cdots x_n$ which cannot arise when other summands are expressed in terms of the specified basis \eqref{eq:A basis} for $A_T$.
Consequently, expanding the product $\prod_{j=1}^n \bar{x}_j\bar{y}_j$ yields a summand 
$\pm  y_1 y_2y_3\cdots y_n\otimes x_1x_2x_3\cdots x_n$, 
and no other summand in the expansion involves this tensor product of basis elements.  Thus,
$\prod_{j=1}^n \bar{x}_j\bar{y}_j$ is nonzero in $A_T\otimes A_T$, as asserted.
\end{proof}

\begin{remark} The subalgebra $A_T=H_T/I_T$ is not isomorphic to $H^*(F(T,n))$.  One can check, for instance, that the differential $d\colon E_2^{1,1}\to E_2^{3,0}$ has nontrivial kernel, 
where $E_2^{1,1}$ is the quotient of $H^1(F(T,n))\otimes H^1(F(\R^2,n))$ by the relations $(p_i^*(x)-p_j^*(x))\otimes \alpha_{i,j}$ for $x\in H^1(T)$ and $E_2^{3,0}=H^3(F(T,n))$. 
However, $A_T$ and $H^*(F(T,n))$ do have the same zero-divisor cup length.  Theorem \ref{thm:torus} implies that $\zcl A_T=\zcl H^*(F(T,n))=2n$.
\end{remark}

\begin{remark} The algebra $A_T=H_T/I_T$ is Koszul.  A straightforward application of the Buchberger criterion (see \cite[Thm. 1.4]{AHH}) reveals that the generating set \eqref{eq:xygens} of the defining ideal $I_T$ is a Gr\"obner basis.  Since all of these generators are of degree two, the Koszulity of $A_T$ follows (see, for instance, \cite[Thm. 6.16]{Yuz}).
\end{remark}

\section{Higher genus} \label{sec:big g}

\begin{theorem} \label{thm:big g}
The topological complexity of the configuration space of $n$ distinct ordered points on a surface
$\Sigma$ of genus $g \ge 2$ is
\[
\tc(F(\Sigma,n))=2n+3.
\]
\end{theorem}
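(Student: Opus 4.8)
The plan is to follow the same two-sided strategy used in Theorems \ref{thm:sphere} and \ref{thm:torus}: establish the upper bound $\tc(F(\Sigma,n))\le 2n+3$ by combining the Fadell--Neuwirth fibration with the dimension and product inequalities, and then match it with the cohomological lower bound from Proposition \ref{prop:totaro}. For the upper bound, I would first write $F(\Sigma,n)\to F(\Sigma,1)=\Sigma$ as a fibration with fiber $F(\Sigma\smallsetminus Q_1,n-1)$. Iterating Fadell--Neuwirth, the fiber is an aspherical space whose fundamental group is an iterated semidirect product of free groups, hence it has the homotopy type of a CW complex of dimension $n-1$; so $\tc(F(\Sigma\smallsetminus Q_1,n-1))\le 2(n-1)+1=2n-1$ by \eqref{eq:upper bound}. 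Since $\tc(\Sigma)=5$ for $g\ge 2$, the product inequality \eqref{eq:product} gives $\tc(F(\Sigma,n))\le 5+(2n-1)-1=2n+3$. (Here one must be a little careful: $F(\Sigma,n)$ is not literally a product, so strictly one should invoke a fibration-version estimate $\tc(E)\le \tc(B)+\tc(F)-1$ — but this is exactly what is already available in the toolkit, via the same reasoning as \cite{FInv}, and I expect it to be stated or referenced at this point of the paper.)

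For the lower bound I would realize $\Sigma=\Sigma_g$ as a smooth complex projective curve and apply Proposition \ref{prop:totaro}. Let $H_\Sigma=H^*(\Sigma^{\times n};\Q)=\bigl[H^*(\Sigma;\Q)\bigr]^{\otimes n}$, where $H^*(\Sigma;\Q)$ has degree-one generators $a_1,\dots,a_g,b_1,\dots,b_g$ with $a_kb_k=b_ka_k=\omega$ the fundamental class and all other degree-two products zero. Writing $a_k^{(i)},b_k^{(i)}$ for the pullbacks under $p_i$, the diagonal class is $\Delta=\sum_{k=1}^g\bigl(a_k\times b_k-b_k\times a_k\bigr)+\omega\times 1+1\times\omega$, so that $\Delta_{i,j}=p_{i,j}^*\Delta$ can be rewritten, after the change of basis $x_k^{(i)}=a_k^{(i)}-a_k^{(1)}$, $y_k^{(i)}=b_k^{(i)}-b_k^{(1)}$ for $i\ge 2$, in a manageable form analogous to the torus case. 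Set $A_\Sigma=H_\Sigma/I_\Sigma$ with $I_\Sigma=\langle \Delta_{i,j}\rangle$; by Proposition \ref{prop:totaro}, $\zcl H^*(F(\Sigma,n);\Q)\ge \zcl A_\Sigma$, so it suffices to exhibit a nonzero product of $2n+2$ zero-divisors in $A_\Sigma\otimes A_\Sigma$.

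The natural candidate is $\prod_{j=1}^n \bar x_1^{(j)}\bar y_1^{(j)}\cdot \bar x_2^{(1)}\bar y_2^{(1)}$, or more robustly the product of $2n$ classes $\bar x_1^{(j)}\bar y_1^{(j)}$ (one pair per point, using only the first genus-generator, exactly as in Theorem \ref{thm:torus}, giving $\zcl\ge 2n$) multiplied by two extra zero-divisors coming from a \emph{second} pair of generators $a_2,b_2$ — for instance $\bar a_2^{(1)}=a_2^{(1)}\otimes 1-1\otimes a_2^{(1)}$ and $\bar b_2^{(1)}=b_2^{(1)}\otimes 1-1\otimes b_2^{(1)}$. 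One then expands the full product and isolates a single monomial basis element of $A_\Sigma\otimes A_\Sigma$ — the analogue of $\pm\, y_1\cdots y_n\otimes x_1\cdots x_n$ from the torus argument, now decorated with $a_2^{(1)}b_2^{(1)}=\omega$ on one side — that arises from exactly one term in the expansion and therefore cannot cancel. This shows $\zcl A_\Sigma\ge 2n+2$, hence $\tc(F(\Sigma,n))\ge 2n+3$ by \eqref{eq:lower bound}, completing the proof. The main obstacle will be the bookkeeping in this last step: one must pin down an explicit additive basis for $A_\Sigma$ (the multi-genus analogue of \eqref{eq:A basis}, sorting $x$-indices below $y$-indices within each genus block and forbidding repeated indices modulo the relations), verify that the genus-$g\ge 2$ relations $\Delta_{i,j}$ do not accidentally kill the chosen monomial, and confirm that no other term in the expansion of the product produces the same basis element — the extra generators $a_2,b_2$ are precisely what makes room for the two additional factors, reflecting why the answer jumps from $2n+1$ (genus $\le 1$) to $2n+3$ (genus $\ge 2$).
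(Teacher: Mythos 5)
Your lower bound is essentially the paper's argument: the same product of $2n+2$ zero-divisors (the torus-style classes $\bar{x}_j\bar{y}_j$ built from one handle at every point, together with two extra classes from a second handle at the first point), the same reduction to genus $2$ by annihilating the remaining degree-one generators, and the same target monomial. The one step you defer to ``bookkeeping'' --- verifying that the relations $\Delta_{i,j}$ do not kill the chosen basis element --- is genuinely the crux, and the paper closes it not by writing down a basis of $A_\Sigma$ itself but by passing to the further quotient $B_\Sigma=H_\Sigma/(I_\Sigma+J_\Sigma)$, where $J_\Sigma=\langle c_ic_j,c_id_j,d_id_j\mid i\neq j\rangle$; modulo $J_\Sigma$ each $\Delta_{i,j}$ collapses to the torus relation $(a_j-a_i)(b_j-b_i)$, so the explicit basis \eqref{eq:A basis} from Theorem \ref{thm:torus} transfers and the surviving summand is identified there. (A small correction: $\bar{a}_1\bar{b}_1\bar{c}_1\bar{d}_1=2\,\omega_1\otimes\omega_1$, so the surviving term carries $\omega_1$ on \emph{both} tensor factors, not just one.)

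The upper bound as you wrote it has a genuine gap. Inequality \eqref{eq:product} is a statement about Cartesian products, and for $g\ge 2$ the space $F(\Sigma,n)$ is \emph{not} a product over $\Sigma$ (unlike the torus, which is a topological group). The ``fibration version'' $\tc(E)\le\tc(B)+\tc(F)-1$ that you hope is already in the toolkit is not in the paper and is false in general: for the $3$-dimensional Heisenberg nilmanifold $N$, viewed as a circle bundle over $T^2$, one has $\tc(N)=5>\tc(T^2)+\tc(S^1)-1=4$. The correct route --- and the one the paper takes --- is to bound the dimension of the total space rather than assembling $\tc$ fiberwise: since the Fadell--Neuwirth fibration $F(\Sigma,n)\to\Sigma$ admits a section, $\pi_1(F(\Sigma,n))\cong\pi_1(F(\Sigma\smallsetminus Q_1,n-1))\rtimes\pi_1(\Sigma)$, where the first factor is an iterated semidirect product of free groups of cohomological dimension $n-1$; hence $F(\Sigma,n)$ is aspherical of geometric dimension $n+1$, and \eqref{eq:upper bound} gives $\tc(F(\Sigma,n))\le 2(n+1)+1=2n+3$ directly. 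Your sketch already contains all the ingredients for this (you use the $(n-1)$-dimensionality of the fiber), so the repair is short, but the step as stated does not go through.
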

\begin{proof}
For $n=1$, since $F(\Sigma,1)=\Sigma$, we have $\tc(F(\Sigma,1))=5$ as noted previously.  So assume that $n\ge 2$.

The configuration space $F(\Sigma,n)$ is an Eilenberg-Mac\,Lane space of type
$K(G,1)$, where $G=\pi_1(F(\Sigma,n))$ is the pure braid group of $\Sigma$.  Since the Fadell--Neuwirth fibration $F(\Sigma,n)\to\Sigma$ has a section, the group $G\cong \pi_1(F(\Sigma\smallsetminus Q_1,n-1)) \rtimes \pi_1(\Sigma)$ is a semidirect product. As in the genus $1$ case, the group $\pi_1(F(\Sigma\smallsetminus Q_1,n-1))$ is an iterated semidirect product of free groups.  It follows that the cohomological dimension of $G$ is equal to $n+1$, as is the geometric dimension.  Consequently, $F(\Sigma,n)$ has the homotopy type of a cell complex of dimension $n+1$.  So $\tc(F(\Sigma,n)) \le 2n+3$.

By 
\eqref{eq:lower bound}, it suffices to show that $\zcl H^*(F(\Sigma,n);\Q) \ge 2n+2$.  We again use the
Leray spectral sequence of the inclusion $F(\Sigma,n) \hookrightarrow \Sigma^{\times n}$ following \cite{To},
and write $H^*(\Sigma)=H^*(\Sigma;\Q)$.  Let $a(p)$, $b(p)$, $1\le p \le g$, be the generators of $H^1(\Sigma)$, satisfying, for $p\neq q$,  $a(p)b(p)=a(q)b(q)=\omega$ and  $a(p)a(q)=b(p)b(q)=a(p)b(q)=0$, where $\omega$ generates $H^2(\Sigma)$.  Then the diagonal class $\Delta \in H^2(\Sigma \times \Sigma)$ may be expressed as
\[
\Delta=\omega\times 1 + 1\times \omega +
\sum_{p=1}^g(b(p)\times a(p)-a(p)\times b(p)).
\]

Let $H_\Sigma=H^*(\Sigma^{\times n})=[H^*(\Sigma)]^{\otimes n}$, and let $I_\Sigma$ be the ideal in $H_\Sigma$ generated by the elements $\Delta_{i,j}=p^*_{i,j}\Delta$, $1\le i<j\le n$.  Realizing $\Sigma$ as a smooth, complex projective curve, by Proposition \ref{prop:totaro} and Lemma \ref{lem:zcl estimates},
it is enough to show that the subalgebra $A_\Sigma=H_\Sigma/I_\Sigma$ of $H^*(F(\Sigma,n))$ satisfies $\zcl A_\Sigma \ge 2n+2$.  Annihilating all generators of $H_\Sigma$ of the form $1\times\dots\times u \times\dots\times 1$, where $u \in\set{a(q),b(q)\mid 3\le q\le n}$, it suffices to consider the genus $g=2$ case.

For a genus two surface $\Sigma$, 
denote the generators of $H^1(\Sigma)$ by $a,b,c,d$, with $ab=cd=\omega$, and other cup products equal to zero.  Let $a_i,b_i,c_i,d_i$ be the generators of $H_\Sigma$, where for $1\le i\le n$, $u_i=1\times\dots\times \overset{i}{u} \times \dots \times 1$ as before.  In this notation, we have
\[
\Delta_{i,j}=\omega_i  + \omega_j - a_i b_j + b_i a_j - c_i d_j + d_i c_j.
\]

Consider the ideal $J_\Sigma=\langle c_ic_j,c_id_j,d_id_j\mid 1\le i,j\le n, i\neq j\rangle$ in $H_\Sigma$.  Observe that
\[
\Delta_{i,j}=a_ib_i  +  a_jb_j - a_i b_j + b_i a_j=(a_j-a_i)(b_j-b_i)\mod J,
\]
and that 
\[
B_\Sigma = H_\Sigma/(I_\Sigma+J_\Sigma)\cong (H_\Sigma/I_\Sigma)/((I_\Sigma+J_\Sigma)/I_\Sigma)\]
is a quotient of $A_\Sigma$.  Consequently,  $\zcl B_\Sigma \le \zcl A_\Sigma$.  The subalgebra of $B_\Sigma$ generated by $\set{a_i,b_i\mid 1\le i \le n}$ is isomorphic to the algebra $A_T$ arising in the genus one case, see \eqref{eq:AT}.  Letting $x_j=a_j-a_1$ and $y_j=b_j-b_1$ for $j\ge 2$ as in that case, it follows that the set
\[
 \set{x_Jy_K \mid J,K\subset [2,n], \max J < \min K}
\]
is linearly independent in $B_\Sigma$.  From this, it follows easily that the zero-divisor cup length of $B_\Sigma$ is (at least) $2n+2$.  Indeed, writing $\bar{u}=u\otimes 1-1\otimes u \in B_\Sigma \otimes B_\Sigma$ for $u\in B_\Sigma$, a calculation reveals that
$\bar{a}_1\bar{b}_1\bar{c}_1\bar{d}_1=2\omega_1 \otimes \omega_1$.
Then, expanding the product $\bar{a}_1\bar{b}_1\bar{c}_1\bar{d}_1 \prod_{j=2}^n \bar{x}_j \bar{y}_j$ of $2n+2$ zero divisors yields a summand
\[
 \pm 2\omega_1 y_2y_3\cdots y_n\otimes \omega_1 x_2x_3\cdots x_n,
\]
and no other summand in the expansion involves this (nonzero) tensor product.  Thus, $2n+2 \le \zcl B_\Sigma \le \zcl A_\Sigma \le \zcl H^*(F(\Sigma,n))$.
\end{proof}

\section{Punctured surfaces} \label{sec:punctures}
In this section, we determine the topological complexity of the configuration space of $n$ ordered points on a punctured surface. Recall that
$X\smallsetminus Q_m$ denotes the complement of a set $Q_m$ of $m$ distinct  points in $X$.

\begin{theorem} \label{thm:punctured sphere}
For $m \ge 1$, the topological complexity of the configuration space of $n$ distinct ordered points on $S^2\smallsetminus  Q_m$ is
\[
\tc(F(S^2\smallsetminus Q_m,n))=\begin{cases}
1&\text{if $m=1$ and $n=1$,}\\
2n-2&\text{if $m=1$ and $n\ge 2$,}\\
2n&\text{if $m=2$ and $n\ge 1$,}\\
2n+1&\text{if $m\ge 3$ and $n\ge 1$.}
\end{cases}
\]
\end{theorem}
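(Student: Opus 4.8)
The plan is to move the problem from the punctured sphere to the punctured plane and then treat the three regimes $m=1$, $m=2$, $m\ge 3$ separately. Stereographic projection gives a homeomorphism $S^2\smallsetminus Q_m\cong\R^2\smallsetminus Q_{m-1}$ (with the convention $Q_0=\emptyset$, so that $S^2\smallsetminus Q_1\cong\R^2$), whence $F(S^2\smallsetminus Q_m,n)\cong F(\R^2\smallsetminus Q_{m-1},n)$, and it suffices to compute $\tc$ of the latter. The one fact I would rely on beyond the tools of \S\ref{sec:prelim} is that $\tc\bigl(F(\R^2\smallsetminus Q_r,n)\bigr)=2n+1$ for all $r\ge2$ and $n\ge1$; its $r=2$ instance is \cite[Thm.~6.1]{FGY}, already used in \S\ref{sec:sphere}, and for general $r$ I would give the self-contained argument sketched in the third paragraph.

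For $m=1$ the space is $F(\R^2,n)$: it is contractible when $n=1$, so $\tc=1$, and $\tc(F(\R^2,n))=2n-2$ for $n\ge2$ by Farber--Yuzvinsky \cite{FY}. For $m=2$ I would argue as in the torus case of \S\ref{sec:torus}: since $\R^2\smallsetminus Q_1$ is the topological group $\C\smallsetminus\{0\}$, the group structure gives a homeomorphism $F(\C\smallsetminus\{0\},n)\cong(\C\smallsetminus\{0\})\times F(\C\smallsetminus\{0,1\},n-1)$, and $\C\smallsetminus\{0,1\}\cong\R^2\smallsetminus Q_2$. Since $\tc(\C\smallsetminus\{0\})=\tc(S^1)=2$ and $\tc(F(\R^2\smallsetminus Q_2,n-1))=2(n-1)+1$, the product inequality \eqref{eq:product} gives $\tc\le 2+(2n-1)-1=2n$; and since $\zcl H^*(\C\smallsetminus\{0\};\Q)=1$ while $\zcl H^*(F(\R^2\smallsetminus Q_2,n-1);\Q)\ge 2(n-1)$, Lemma \ref{lem:zcl estimates}\eqref{item:tensor product} with \eqref{eq:lower bound} gives $\tc\ge 1+2(n-1)+1=2n$ (for $n=1$ read $F(\R^2\smallsetminus Q_2,0)$ as a point). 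Hence $\tc=2n$.

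The case $m\ge3$, i.e.\ $r:=m-1\ge2$, is the main point; here I would prove $\tc\bigl(F(\R^2\smallsetminus Q_r,n)\bigr)=2n+1$ directly. For the upper bound, iterated Fadell--Neuwirth fibrations \cite{FN} exhibit $F(\R^2\smallsetminus Q_r,n)$ as a $K(\pi,1)$ whose group is an iterated semidirect product of free groups (the fibre at the $k$-th stage is $\R^2\smallsetminus Q_{r+k-1}$, homotopy equivalent to $\bigvee_{r+k-1}S^1$), so, as in \S\ref{sec:torus}, this space has the homotopy type of an $n$-dimensional CW complex and $\tc\le 2n+1$ by \eqref{eq:upper bound}. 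For the matching lower bound it is enough, by \eqref{eq:lower bound}, to show $\zcl H^*(F(\R^2\smallsetminus Q_r,n);\Q)\ge 2n$. Fix two of the punctures, $p_1$ and $p_2$. The inclusion $F(\R^2\smallsetminus Q_r,n)\hookrightarrow(\C\smallsetminus\{p_1,p_2\})^n$ realises $F(\R^2\smallsetminus Q_r,n)$ as the complement of the hyperplane arrangement obtained from the arrangement $\{z_i=p_1,\ z_i=p_2\mid 1\le i\le n\}$ defining $(\C\smallsetminus\{p_1,p_2\})^n$ by adjoining the diagonals $z_i=z_j$ and the remaining puncture hyperplanes $z_i=p_l$ with $l\ge3$; since the inclusion of a subarrangement complement induces a monomorphism of Orlik--Solomon algebras, $H^*\bigl((\C\smallsetminus\{p_1,p_2\})^n;\Q\bigr)$ is a subalgebra of $H^*(F(\R^2\smallsetminus Q_r,n);\Q)$. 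Now $H^*(\C\smallsetminus\{p_1,p_2\};\Q)\cong H^*(S^1\vee S^1;\Q)$ has zero-divisor cup length $2$ (if $u,v$ generate $H^1$ then $\bar u\,\bar v=v\otimes u-u\otimes v\ne0$, while every product of three positive-degree elements of the tensor square vanishes for degree reasons), so Künneth together with Lemma \ref{lem:zcl estimates}\eqref{item:subalgebra} and \eqref{item:tensor product} yields $\zcl H^*(F(\R^2\smallsetminus Q_r,n);\Q)\ge\zcl\bigl(H^*(\C\smallsetminus\{p_1,p_2\};\Q)^{\otimes n}\bigr)\ge 2n$. Thus $\tc=2n+1$, and assembling the three regimes gives the stated formula; for $n=1$ the values $1$, $2$, $3$ recover $\tc(\ast)$, $\tc(S^1)$, and $\tc(\bigvee_r S^1)$ ($r\ge2$) noted in \S\ref{sec:intro}.

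The step I expect to be the main obstacle is exactly this lower bound for $m\ge3$: verifying that the restriction $H^*\bigl((\C\smallsetminus\{p_1,p_2\})^n;\Q\bigr)\to H^*\bigl(F(\R^2\smallsetminus Q_r,n);\Q\bigr)$ is injective. This concerns complements of open (affine) curves, so the Totaro degeneration recalled in \S\ref{sec:prelim}, stated there for smooth projective varieties, does not apply; the injectivity instead rests on the standard fact that the Orlik--Solomon algebra of a subarrangement embeds, compatibly with no-broken-circuit bases, in that of the ambient arrangement. One could instead invoke \cite[Thm.~6.1]{FGY} for $r=2$ and reduce the general case to it via the analogous monomorphism $H^*(F(\R^2\smallsetminus Q_2,n);\Q)\hookrightarrow H^*(F(\R^2\smallsetminus Q_r,n);\Q)$, but this arrangement-theoretic input is needed either way. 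A small amount of extra care is also needed for the translation homeomorphism in the $m=2$ case and for the boundary value $n=1$ throughout.
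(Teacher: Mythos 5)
Your proposal is correct, and its skeleton is the same as the paper's: reduce via stereographic projection to $F(\R^2\smallsetminus Q_{m-1},n)$ and split into the regimes $m=1$, $m=2$, $m\ge 3$. The difference is in how much is proved versus cited. The paper's proof is essentially a two-line reduction: for $m=1$ it quotes Farber--Yuzvinsky; for $m=2$ it uses the homotopy equivalence $F(\R^2\smallsetminus Q_1,n)\simeq F(\R^2,n+1)$ (coming from the Fadell--Neuwirth fibration $F(\R^2,n+1)\to\R^2$ over a contractible base) and again quotes Farber--Yuzvinsky, so that $\tc=2(n+1)-2=2n$; and for $m\ge 3$ it quotes Farber--Grant--Yuzvinsky. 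Your $m=2$ argument via the group structure of $\C\smallsetminus\{0\}$ and the product inequality is valid but less economical, since it leans on the $\tc$ and $\zcl$ computations for $F(\R^2\smallsetminus Q_2,n-1)$ rather than on the cheaper equivalence with $F(\R^2,n+1)$. Your $m\ge 3$ argument is the genuinely different piece: instead of citing \cite{FGY} you re-derive the lower bound $\zcl H^*(F(\R^2\smallsetminus Q_r,n);\Q)\ge 2n$ by embedding the Orlik--Solomon algebra of the subarrangement $\{z_i=p_1,\,z_i=p_2\}$ (whose complement is $(\C\smallsetminus\{p_1,p_2\})^n$) into that of the full arrangement and applying Lemma \ref{lem:zcl estimates}; this is correct (the injectivity is standard, e.g.\ from the deletion--restriction sequence or a compatible no-broken-circuit basis), self-contained, and uniform in $r\ge 2$, at the cost of importing arrangement-theoretic input that the paper avoids by citation. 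The upper bound via the iterated-semidirect-product $K(\pi,1)$ structure matches the paper's technique from the torus and higher-genus sections. In short: same decomposition, correct in all cases, with the citations of \cite{FY} and \cite{FGY} partially replaced by direct arguments.
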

\begin{proof}
Observe that $F(S^2\smallsetminus Q_1,n)=F(\R^2,n)$, that $F(S^2\smallsetminus Q_2,n)=F(\R^2\smallsetminus Q_1,n) \simeq F(\R^2,n+1)$, and that for $m\ge 3$,  $F(S^2\smallsetminus Q_m,n)=F(R^2\smallsetminus Q_{m-1},n)$ is the configuration space of $n$ points in the complement of at least $2$ points in $\R^2$.  For $n=1$, $F(S^2\smallsetminus Q_m,1)$ has the homotopy type of a bouquet of $m-1$ circles (where a bouquet of $0$ circles is a point), and the theorem follows easily.  For $n\ge 2$, in light of the above observations,
the theorem follows from results of Farber and Yuzvinsky \cite{FY} for $m\le 2$, and from results of Farber, Grant, and Yuzvinsky \cite{FGY} for $m\ge 3$.
\end{proof}

\begin{remark}
For $m\ge 1$, the configuration space $F(S^2\smallsetminus Q_m,n))$ is an Eilenberg-Mac\,Lane space of type $K(\pi,1)$, where $\pi=\pi_1(F(S^2\smallsetminus Q_m,n))$ is the pure braid group of $S^2\smallsetminus  Q_m$.  Since these groups are almost-direct product of free groups, the above result may also be obtained using the methods of \cite{DC}.
\end{remark}

\begin{theorem} \label{conj:big g punctured}
Let $\Sigma$ be a surface of genus $g \ge 1$. For $m \ge 1$, the topological complexity of the configuration space of $n$ distinct ordered points on $\Sigma\smallsetminus  Q_m$ is
\[
\tc(F(\Sigma\smallsetminus  Q_m,n))=2n+1.
\]
\end{theorem}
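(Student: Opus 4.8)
The plan is to prove the two bounds $\tc(F(\Sigma\smallsetminus Q_m,n))\le 2n+1$ and $\tc(F(\Sigma\smallsetminus Q_m,n))\ge 2n+1$ separately.

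For the upper bound I would follow the proof of Theorem~\ref{thm:big g}, the one difference being that the punctures lower the dimension: the open surface $\Sigma\smallsetminus Q_m$ is homotopy equivalent to a wedge of $2g+m-1\ge 2$ circles, so $\pi_1(\Sigma\smallsetminus Q_m)$ is \emph{free}. Iterating the Fadell--Neuwirth fibration $F(\Sigma\smallsetminus Q_m,n)\to F(\Sigma\smallsetminus Q_m,n-1)$, whose fiber is again an open surface $\Sigma\smallsetminus Q_{m+n-1}$, exhibits $F(\Sigma\smallsetminus Q_m,n)$ as a $K(G,1)$ with $G$ an iterated semidirect product of $n$ free groups; hence $G$ has geometric dimension $n$ and $F(\Sigma\smallsetminus Q_m,n)$ has the homotopy type of an $n$-dimensional complex. (By contrast, a closed surface contributes a surface group of cohomological dimension $2$, which is what makes the dimension $n+1$ in Theorem~\ref{thm:big g}.) Then \eqref{eq:upper bound} gives $\tc\le 2n+1$.

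For the lower bound, put $X=\Sigma\smallsetminus Q_m$ and work over $\Q$. I claim the restriction map $j^*\colon H^*(X^{\times n})\to H^*(F(X,n))$ induced by the inclusion $F(X,n)\hookrightarrow X^{\times n}$ is injective. Granting this, $H^*(X^{\times n})$ is a subalgebra of $H^*(F(X,n))$, so Lemma~\ref{lem:zcl estimates}(\ref{item:subalgebra}) and iterated use of Lemma~\ref{lem:zcl estimates}(\ref{item:tensor product}) give
\[
\zcl H^*(F(X,n))\ \ge\ \zcl H^*(X^{\times n})\ \ge\ n\cdot\zcl H^*(X)\ =\ 2n,
\]
where $\zcl H^*(X)=2$ since $X$ is homotopy equivalent to a wedge of at least two circles: for independent $c,c'\in H^1(X)$ one has $\bar c\,\bar c'=-(c\otimes c'+c'\otimes c)\neq 0$ with $\bar u=u\otimes 1-1\otimes u$, while any product of three positive zero-divisors vanishes for degree reasons. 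Then \eqref{eq:lower bound} yields $\tc\ge 2n+1$.

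The main obstacle is the injectivity of $j^*$, which I would prove by induction on $n$ using the Fadell--Neuwirth fibration $p\colon F(X,n)\to F(X,n-1)$ with fiber the open surface $X\smallsetminus Q_{n-1}$. The crucial feature is that $X$ is \emph{non-compact}: then $H^2(X;\Q)=0$, and the diagonal class $\Delta$ vanishes in $H^2(X\times X;\Q)$ because it pairs trivially with the classes $[C]\times[C']$ spanning $H_2(X\times X;\Q)$, where $C,C'$ are disjoint embedded circles in $X$. Hence in the Leray/Cohen--Taylor spectral sequence of $F(X,n)\hookrightarrow X^{\times n}$ (see \cite{CohenTaylor,To}) the differential $d\alpha_{i,j}=p^*_{i,j}\Delta$ is zero; equivalently the ``winding'' generator of $H^1$ of the fiber of $p$ admits a global lift, so $p$ obeys the Leray--Hirsch theorem. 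Thus $H^*(F(X,n))$ is a free $H^*(F(X,n-1))$-module on $1$ together with lifts of a basis of $H^1(X\smallsetminus Q_{n-1})$, and one may choose the lifts of the classes coming from $H^1(X)$ to be pulled back along the map $F(X,n)\to X$ recording the last point. Writing $H^*(X^{\times n})=H^*(X^{\times(n-1)})\otimes H^*(X)$ and tracing the two tensor factors --- the first through $p^*$ composed with the inductively injective $j^*$ in length $n-1$, the second onto the chosen free-module basis elements --- one finds that $j^*$ carries a nonzero class to a nontrivial $H^*(F(X,n-1))$-linear combination of distinct free-module generators, hence to a nonzero class. The only auxiliary point to verify is the existence of the global winding classes in $H^1(F(X,n);\Q)$; this is exactly what the vanishing of $\Delta$ provides, and it is precisely here that compactness of the surface (as in the earlier sections) would break the argument.
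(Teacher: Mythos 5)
Your upper bound is the paper's argument verbatim and is fine. The lower bound, however, rests on a claim that is false precisely in the range $g\ge 1$ covered by this theorem: for $X=\Sigma_g\smallsetminus Q_m$ with $g\ge 1$, the diagonal class $\Delta$ does \emph{not} vanish in $H^2(X\times X;\Q)$. Since $H_2(X;\Q)=0$, Künneth gives $H_2(X\times X;\Q)\cong H_1(X)\otimes H_1(X)$, and $\Delta$ evaluates on $[C]\times[C']$ as the algebraic intersection number $C\cdot C'$ in $X$. The intersection form on $H_1$ of a punctured surface of positive genus is nonzero (e.g.\ $a\cdot b=1$ on a once-punctured torus), and the homologically essential curves realizing this cannot be made disjoint; your ``disjoint embedded circles'' only span $H_2(X\times X)$ in the planar case $g=0$. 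Consequently the winding classes $\alpha_{i,j}$ do not lift to $H^1(F(X,n))$ (one checks, e.g., that $b_1(F(X,2))=4$ rather than $5$ for $X$ a punctured torus), Leray--Hirsch fails for the Fadell--Neuwirth fibrations, and $j^*\colon H^*(X^{\times n})\to H^*(F(X,n))$ is \emph{not} injective: the restriction of $\Delta_{i,j}$ to the complement of the diagonal is zero by the Thom-class exact sequence, so the nonzero classes $\Delta_{i,j}$ lie in $\ker j^*$. Thus $H^*(X^{\times n})$ is not a subalgebra of $H^*(F(X,n))$, and the inequality $\zcl H^*(F(X,n))\ge n\cdot\zcl H^*(X)=2n$ does not follow.

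The substance of the theorem is exactly to control what survives modulo the ideal generated by the diagonal classes (and modulo whatever else is lost in passing from the closed surface to the punctured one). The paper does this by writing $F(\Sigma\smallsetminus Q_1,n)=F(\Sigma,n)\smallsetminus D$, using the Gysin sequence together with the mixed Hodge structure: the connecting map lands (on the weight-$m$ part) in the ideal $\langle a_ib_i\rangle$, so the monomials $u_1\cdots u_kv_{k+1}\cdots v_n$ remain nonzero in $H^n(F(\Sigma\smallsetminus Q_1,n))$ and are linearly independent because they have distinct Hodge types $(k,n-k)$; this is what makes the product of the $2n$ zero-divisors $\bar u_i,\bar v_i$ nonzero, and an induction on $m$ handles additional punctures. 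Your choice of zero-divisors is the right one, but the justification that their product is nonzero is the missing (and hard) part; as written, your argument proves the $g=0$ punctured case (Theorem~\ref{thm:punctured sphere}), not this one.
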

\begin{proof}
For $n=1$, since $F(\Sigma\smallsetminus Q_m,1)=\Sigma\smallsetminus Q_m$ has the homotopy type of a bouquet of $r\ge 2$ circles, we have $\tc(F(\Sigma\smallsetminus Q_m,1))=3$.  So assume that $n\ge 2$.

The configuration space $F(\Sigma\smallsetminus Q_m,n)$ is an Eilenberg-Mac\,Lane space of type
$K(G,1)$, where $G=\pi_1(F(\Sigma\smallsetminus Q_m,n))$ is the pure braid group of $\Sigma\smallsetminus Q_m$.  As in previous cases, the group $G$ is an iterated semidirect product of free groups, and the geometric dimension of $G$ is equal to $n$.  Consequently, $F(\Sigma\smallsetminus Q_m,n)$ has the homotopy type of a cell complex of dimension $n$.  So $\tc(F(\Sigma\smallsetminus Q_m,n)) \le 2n+1$.

By \eqref{eq:lower bound}, it suffices to show that $\zcl H^*(F(\Sigma\smallsetminus Q_m,n);\k) \ge 2n$.  We will use complex coefficients $\k=\C$, and write $H^*(X)=H^*(X;\C)$.

First, assume that $m=1$.  Let $p\in \Sigma$ and $Q_1=\set{p}$.  The configuration space $F(\Sigma\smallsetminus Q_1,n)$ may be realized as
$F(\Sigma\smallsetminus Q_1,n)=X\smallsetminus D=X\smallsetminus \bigcup_{i=1}^n D_i$, where $X=F(\Sigma,n)$ and $D_i= \set{(x_1,\dots,x_n) \in X \mid x_i=p}$. 
Note that $D_i \cong F(\Sigma\smallsetminus Q_1,n-1)$ is closed in $X$, and $D_i \cap D_j = \emptyset$ if $i\neq j$.
Consider the corresponding Gysin sequence
\begin{equation} \label{eq:gysin}
\ldots \xrightarrow{\ } H^{k-2}(D) \xrightarrow{\,\delta\,} H^k(X) \xrightarrow{\,j^*} H^k(X\smallsetminus D) \xrightarrow{\,R\,} H^{k-1}(D) \xrightarrow{\,\delta\,} H^{k+1}(X) \xrightarrow{\ } \ldots
\end{equation}
where $j^*$ is induced by the inclusion $j\colon X\smallsetminus D \hookrightarrow X$, $R$ is the residue map, and $\delta$ is the connecting homomorphism.  Using this sequence, we observe that the map $j^*\colon H^1(X) \to H^1(X\smallsetminus D)$, that is,
$j^*\colon H^1(F(\Sigma,n)) \to H^1(F(\Sigma\smallsetminus Q_1,n))$, is injective.

Let $H_\Sigma=H^*(\Sigma^{\times n})=[H^*(\Sigma)]^{\otimes n}$, and let $I_\Sigma$ be the ideal in $H_\Sigma$ generated by the elements $\Delta_{i,j}=p^*_{i,j}\Delta$, $1\le i<j\le n$.  Then, 
by Proposition \ref{prop:totaro}, $A_\Sigma=H_\Sigma/I_\Sigma$ is a subalgebra of $H^*(F(\Sigma,n))=H^*(X)$.  Since $I_\Sigma$ is generated in degree $2$, the generators of $H_\Sigma$ are among the generators of $H^*(X)$.  Consider the generators $x_1=a_1$, $y_1=b_1$, $x_i=a_i-a_1$, $y_i=b_i-b_1$, $2\le i \le n$, which arose the the proofs of Theorems \ref{thm:torus} and \ref{thm:big g}.  The above observation implies that their images under the map $j^*$ are among the generators of $H^1(X\smallsetminus D)=H^1(F(\Sigma\smallsetminus Q_1,n))$.

Write $u_i=j^*(x_i)$ and $v_i=j^*(y_i)$, and let $\bar{u}_i=u_i\otimes 1-1\otimes u_i$ and 
$\bar{v}_i=v_i\otimes 1-1\otimes v_i$ be the corresponding zero-divisors in $H^*(X\smallsetminus D)
\otimes H^*(X\smallsetminus D)$.
We will show that the product of these $2n$ zero-divisors is nonzero using mixed Hodge structures.  
References include \cite{Dimca}, \cite{GS}, and \cite{PS}.

Realizing $\Sigma$ as a smooth projective variety, the Hodge structure on $H^*(\Sigma)$ is pure.  Assume without loss that the elements $a$ and $b$ of $H^1(\Sigma)$ are of types $(1,0)$ and $(0,1)$ respectively.  Then, for each $i$, $x_i$ and $y_i$ are of types $(1,0)$ and $(0,1)$.  Since the map $j^*\colon H^*(X) \to H^*(X\smallsetminus D)$ preserves type,
the elements $u_i$ and $v_i$ of $H^1(X\smallsetminus D)$ are of types $(1,0)$ and $(0,1)$.  Furthermore, each of these elements is of weight $1$.

Since $X=F(\Sigma,n)$ is smooth, for each $m$, the weight filtration on $H^m(X)$ satisfies $0=W_{m-1}(H^m(X)) \subset W_m(H^m(X))=f^*(H^m(\bar{X}))$, where $f\colon X \hookrightarrow \bar{X}$ is any compactification, see \cite[Thm. C24]{Dimca}. Taking $\bar{X}= \Sigma^{\times n}$, we have $W_m(H^m(X))=f^*(H^m(\Sigma^{\times n}))$.  It follows that $A_\Sigma^m \subset W_m(H^m(X))$. 
Recall that the divisor $D$ has disjoint components $D_i=\set{(x_1,\dots,x_n)\in X\mid x_i=p}$, let $\Sigma^{\times n-1}_i=\set{(x_1,\dots,x_n)\in \Sigma^{\times n}\mid x_i=p}$, and let
$f_i\colon D_i \hookrightarrow \Sigma^{\times n-1}_i$.  Then, $W_{m}(H^{m}(D_i))=f_i^*(H^{m}(\Sigma^{\times n-1}))$.  Note that the diagram
\begin{equation*}
\xymatrixcolsep{25pt}
\xymatrix{
D_i \ar[d]_{f_i} \ar[r] & X \ar[d]^{f}
\\
\qquad \Sigma^{\times n-1}_i \ar[r] & \quad \Sigma^{\times n}
}
\end{equation*}
commutes, where the horizontal maps are inclusions.

The Gysin mapping $H^{k-2}(\Sigma^{\times n-1}_i) \to H^k(\Sigma^{\times n})$ is obtained by applying Poincar\'e duality to the map $H_{2n-k}(\Sigma^{\times n-1}_i) \to H_{2n-k}(\Sigma^{\times n})$ induced by inclusion, see \cite[\S5]{GS}.  
The ring $H^*(\Sigma^{\times n-1}_i)$ is generated by the (images of the) generators $u_j$ of 
$H^*(\Sigma^{\times n}))$ which do not involve the index $i$.  In terms of these generators, 
one can check that this
map is, up to sign, multiplication by $\omega_i=a_i b_i$.  
As noted in \cite[Rem.~C30]{Dimca}, the connecting homomorphism $\delta\colon H^{n-2}(D) \to H^{n}(X)$ in the Gysin sequence \eqref{eq:gysin} is a morphism of mixed Hodge structures of type $(1,1)$.  It follows, by functoriality, that the restriction $\delta \colon W_{m-2}(H^{m-2}(D_i))
\to W_m(H^m(X))$ is also multiplication by $a_i b_i$.

These considerations imply that the image of $\delta\colon W_{n-2}(H^{n-2}(D)) \to W_n(H^n(X))$ is contained in the ideal $\langle a_i b_i \mid 1 \le i \le n\rangle$.  In terms of the generators $x_i$ and $y_i$, this ideal is generated by $x_1y_1$ and $x_iy_1+x_1y_i$ for $2\le i \le n$.  Observe that the basis elements $x_1\cdots x_k y_{k+1}\cdots y_n$ of $A_\Sigma$ from \eqref{eq:A basis} are nontrivial modulo this ideal.  Consequently, the corresponding elements $u_1\cdots u_k v_{k+1}\cdots v_n=j^*(x_1\cdots x_k y_{k+1}\cdots y_n)$ of $H^n(X\smallsetminus D)$ are nonzero, and are linearly independent since they are of distinct types $(k,n-k)$.

It follows easily that the product $\prod_{i=1}^n \bar{u_i} \bar{v_i}$ is nonzero in 
$H^*(X\smallsetminus D)\otimes H^*(X\smallsetminus D)$. Expanding this product yields a linear combination of terms of the form $\alpha \otimes \beta$, where $\alpha$ and $\beta$ are among the independent elements $u_1\cdots u_k v_{k+1}\cdots v_n$ noted above.  In particular, there is a single summand
$\pm u_1u_2\cdots u_n \otimes v_1v_2\cdots v_n$,
and no other summand in the expansion involves this (nonzero) tensor product. Thus, $\zcl H^*(X\smallsetminus D) = \zcl H^*(F(\Sigma\smallsetminus Q_1,n)) \ge 2n$ in the case $m=1$.

For $m>1$, assume inductively that the image of the product $\prod_{i=1}^n \bar{x}_i \bar{y}_i$ is nonzero in $H^*(F(\Sigma\smallsetminus Q_{m-1},n))\otimes H^*(F(\Sigma\smallsetminus Q_{m-1},n))$.  Write $F(\Sigma\smallsetminus Q_{m},n) = F(\Sigma\smallsetminus Q_{m-1},n)\smallsetminus D$ in a manner analogous to the case $m=1$. Then, arguing as above reveals that the image of this product is nonzero in
$H^*(F(\Sigma\smallsetminus Q_{m},n))\otimes H^*(F(\Sigma\smallsetminus Q_{m},n))$ as well.
Thus, $\zcl H^*(F(\Sigma\smallsetminus Q_m,n)) \ge 2n$ as required.
\end{proof}

\section*{Acknowledgment}
Portions of this work were carried out during the Fall of 2008, when the first author visited the
Department of Mathematical Sciences at the University of Durham.  He thanks the Department for its hospitality, and for providing a productive mathematical environment.

\newcommand{\arxiv}[1]{{\texttt{\href{http://arxiv.org/abs/#1}{{arXiv:#1}}}}}

\newcommand{\MRh}[1]{\href{http://www.ams.org/mathscinet-getitem?mr=#1}{MR#1}}

\end{document}